\documentclass[12pt,a4paper]{amsart} 

\usepackage{hyperref} 
\usepackage{amsrefs}
\usepackage{amssymb}
\usepackage[all]{xy}

\let\mod=\undefined
\DeclareMathOperator{\GL}{GL} %
\DeclareMathOperator{\id}{id} %
\DeclareMathOperator{\pd}{pd} %
\DeclareMathOperator{\add}{add} %
\DeclareMathOperator{\End}{End} %
\DeclareMathOperator{\Ext}{Ext} %
\DeclareMathOperator{\Hom}{Hom} %
\DeclareMathOperator{\mod}{mod} %
\DeclareMathOperator{\rad}{rad} %
\DeclareMathOperator{\rep}{rep} %
\DeclareMathOperator{\gldim}{gl.dim} %

\newcommand{\bd}{\mathbf{d}} %

\DeclareMathOperator{\bdim}{\mathbf{dim}} %

\newcommand{\calC}{\mathcal{C}} %
\newcommand{\calE}{\mathcal{E}} %
\newcommand{\calL}{\mathcal{L}} %
\newcommand{\calO}{\mathcal{O}} %
\newcommand{\calP}{\mathcal{P}} %
\newcommand{\calR}{\mathcal{R}} %
\newcommand{\calU}{\mathcal{U}} %
\newcommand{\calV}{\mathcal{V}} %
\newcommand{\calX}{\mathcal{X}} %

\newcommand{\bbM}{\mathbb{M}} %
\newcommand{\bbN}{\mathbb{N}} %
\newcommand{\bbZ}{\mathbb{Z}} %

\newcommand{\frakR}{\mathfrak{R}} %

\newcommand{\ol}{\overline}

\newtheorem*{maintheorem}{Main Theorem}

\numberwithin{equation}{section}
\renewcommand{\theequation}{\thesection.\arabic{equation}}

\newtheorem{corollary}[equation]{Corollary}
\newtheorem{lemma}[equation]{Lemma}
\newtheorem{proposition}[equation]{Proposition}
\newtheorem{theorem}[equation]{Theorem}

\title[Canonical tilting modules over shod algebras]%
{Canonical tilting modules over shod algebras are regular in
codimension one}

\author{Grzegorz Bobi{\'n}ski}

\address{Faculty of Mathematics and Computer Science \\ Nicolaus
Copernicus University \\ ul.~Chopina 12/18 \\ 87-100 Toru\'n \\
Poland}

\email{gregbob@mat.uni.torun.pl}

\date{}

\keywords{shod algebra, canonical tilting module, module variety,
regularity in codimension one}

\subjclass[2000]{16G20, 14B05, 14L30} 

\begin{document}

\begin{abstract}
We show that for a class of modules over shod algebras, including
the canonical tilting modules, the closures of the corresponding
orbits in module varieties are regular in codimension one.
\end{abstract}

\maketitle

Throughout the paper $k$ is a fixed algebraically closed field. By
$\bbZ$, $\bbN$, and $\bbN_+$, we denote the sets of integers,
nonnegative integers, and positive integers, respectively. If $i,
j \in \bbZ$, then $[i, j]$ denotes the set of all $l \in \bbZ$
such that $i \leq l \leq j$.

\section*{Introduction and the main result}

Given a finite dimensional $k$-algebra $\Lambda$ and an element
$\bd$ of the Grothendieck group $K_0 (\Lambda)$ of the category of
$\Lambda$-modules, one defines the variety $\mod_\Lambda (\bd)$ of
$\Lambda$-modules of dimension vector $\bd$. A product $\GL (\bd)$
of general linear groups acts on $\mod_\Lambda (\bd)$ in such a
way that the $\GL (\bd)$-orbits correspond to the isomorphism
classes of $\Lambda$-modules of dimension vector $\bd$. The study
of properties of the module varieties is an important direction of
research in the representation theory of algebras (see for
example~\cites{Crawley-BoeveySchroer2002, GeissSchroer2003,
Richmond2001, Schroer2004}). It is particularly interesting how
properties of points of $\mod_\Lambda (\bd)$, viewed as
$\Lambda$-modules, correspond to their geometric properties. For
example, if $M \in \mod_\Lambda (\bd)$, then $\End_\Lambda (M)$
measures the size of its $\GL (\bd)$-orbit, $\Ext_\Lambda^1 (M,
M)$ measures the size of the tangent space to $\mod_\Lambda (\bd)$
at $M$~\cite{Kraft1984}, and, roughly speaking, $\Ext_\Lambda^2
(M, M)$ says how far is $M$ from being a regular point of
$\mod_\Lambda (\bd)$~\cite{Geiss1996a}. Using these observations a
series of results describing properties of varieties of module
over quasitilted algebras was obtained (see for
example~\cites{BarotSchroer2001, Bobinski2002, Bobinski2008a,
BobinskiSkowronski1999a, BobinskiSkowronski1999b,
BobinskiSkowronski2002}). Recall, that an algebra $\Lambda$ is
called quasi-tilted if $\gldim \Lambda \leq 2$ and either
$\pd_\Lambda X \leq 1$ or $\id_\Lambda X \leq 1$ for each
indecomposable $\Lambda$-module $X$.

The shod (small homological dimensions) algebras introduced by
Coelho and Lanzilotta~\cite{CoelhoLanzilotta1999} are natural
generalization of the quasitilted algebras. An algebra $\Lambda$
is called shod if either $\pd_\Lambda X \leq 1$ or $\id_\Lambda X
\leq 1$ for each indecomposable $\Lambda$-module $X$. The shod
algebras share an important property with the quasitilted
algebras, namely for each indecomposable module $X$ over a shod
algebra $\Lambda$, either $X \in \calL_\Lambda$ or $X \in
\calR_\Lambda$, where $\calL_\Lambda$ denotes the class of all
indecomposable $\Lambda$-modules $X$ such that $\pd_\Lambda Y \leq
1$ for each predecessor $Y$ of $X$ in the module category, and
$\calR_\Lambda$ is defined dually. The aim of the paper is to make
a first step in order to generalize geometric results obtained for
quasititled algebras to shod algebras. Obviously, we may
concentrate on strict shod algebras, where a shod algebra
$\Lambda$ is called strict if $\gldim \Lambda > 2$ (equivalently,
$\gldim \Lambda = 3$).

Given a strict shod algebra $\Lambda$, Coelho, Happel and
Unger~\cite{CoelhoHappelUnger2002} defined the canonical tilting
module $T$, which is the direct sum of the indecomposable
$\Ext$-injective objects in $\calL_\Lambda$ and the indecomposable
projective $\Lambda$-modules which do not belong to
$\calL_\Lambda$. Such a module seems to be not far from being
directing (in the sense of~\cite{HappelRingel1993}). Consequently,
the following theorem, which is the main result of the paper,
should be viewed as an extension
of~\cite{Bobinski2007preprint}*{Main Theorem} to the case of shod
algebras.

\begin{maintheorem}
Let $\Lambda$ be a strict shod algebra and $T$ the canonical
tilting module over $\Lambda$. If $M$ is a direct summand of $T^n$
for some $n \in \bbN$, then the closure of the orbit of $M$ is
regular in codimension one.
\end{maintheorem}

We remark that in the situation of the theorem the closure of the
orbit of $M$ is an irreducible component of the corresponding
module variety.

The paper is organized as follows. In the first section we recall
basic informations about quivers and their representations. We
also collect necessary facts about shod algebras there. In the
next section, we define module varieties and present geometric
tools used in the proof of Main Theorem. In the final section, we
prove Main Theorem.

For a basic background on representation theory of algebras we
refer to~\cite{AssemSimsonSkowronski2006}. Basic algebraic
geometry used in the article can be found in~\cite{Kunz1985}.

The article was written while the author was staying at
University of Bielefeld as an Alexander von Humboldt Foundation
fellow.

\section{Preliminaries on quivers and shod algebras}

By a quiver $\Delta$ we mean a finite set $\Delta_0$ of vertices
and a finite set $\Delta_1$ of arrows together with maps $s, t :
\Delta_1 \to \Delta_0$ which assign to $\alpha \in \Delta_1$ the
starting vertex $s_\alpha$ and the terminating vertex $t_\alpha$.
By a path of length $n \in \bbN_+$ we mean a sequence $\sigma =
\alpha_1 \cdots \alpha_n$ with $\alpha_1, \ldots, \alpha_n \in
\Delta_1$ such that $s_{\alpha_i} = t_{\alpha_{i + 1}}$ for each
$i \in [1, n - 1]$. In the above situation we put $s_\sigma =
s_{\alpha_n}$ and $t_\sigma = t_{\alpha_1}$. Additionally, for
each $x \in \Delta_0$ we introduce the trivial path of length $0$
starting and terminating at $x$ and denoted by $x$. A path
$\sigma$ of positive length such that $s_\sigma = t_\sigma$ is
called an oriented cycle. A full subquiver $\Delta'$ of $\Delta$
(i.e.\ a pair $(\Delta_0', \Delta_1')$ such that $\Delta_0'
\subseteq \Delta_0$ and $\Delta_1'$ consists of all $\alpha \in
\Delta_1$ such that $s_\alpha, t_\alpha \in \Delta_0'$) is called
convex if for every path $\alpha_1 \cdots \alpha_n$ in $\Delta$
with $\alpha_1, \ldots, \alpha_n \in \Delta_1$ and $s_{\alpha_n},
t_{\alpha_1} \in \Delta_0'$, $\alpha_i \in \Delta_1'$ for each $i
\in [1, n]$.

For a quiver $\Delta$ we denote by $k \Delta$ the path algebra of
$\Delta$ defined as follows. The elements of $k \Delta$ are the
formal linear combinations of paths in $\Delta$ and for two paths
$\sigma_1$ and $\sigma_2$ the product of $\sigma_1$ and $\sigma_2$
is either the composition $\sigma_1 \sigma_2$ of paths, if
$s_{\sigma_1} = t_{\sigma_2}$, or $0$, otherwise. Fix $x, y \in
\Delta_0$ and let $\rho = \lambda_1 \sigma_1 + \cdots + \lambda_n
\sigma_n$ for $n \in \bbN_+$, $\lambda_1, \ldots, \lambda_n \in k
\setminus \{ 0 \}$, and pairwise different paths $\sigma_1$,
\ldots, $\sigma_n$. If $s_{\sigma_i} = x$ and $t_{\sigma_i} = y$
for each $i \in [1, n]$, then we write $s_{\rho} = x$ and
$t_{\rho} = y$. If, additionally, the length of $\sigma_i$ is
bigger than $1$ for each $i \in [1, n]$, then $\rho$ is called a
relation. A set $\frakR$ of relations is called minimal if for
each $\rho \in \frakR$, $\rho$ does not belong to the ideal
$\langle \frakR \setminus \{ \rho \} \rangle$ generated by $\frakR
\setminus \{ \rho \}$. If $\frakR$ is a minimal set of relations
such that there exists $n \in \bbN_+$ with the property $\sigma
\in \langle \frakR \rangle$ for each path $\sigma$ of length at
least $n$, then the pair $(\Delta, \frakR)$ is called a bound
quiver. If $(\Delta, \frakR)$ is a bound quiver, then $k \Delta /
\langle \frakR \rangle$ is called the path algebra of $(\Delta,
\frakR)$.

Gabriel proved (see for
example~\cite{AssemSimsonSkowronski2006}*{Corollaries~I.6.10
and~II.3.7}) that for each finite dimensional algebra $\Lambda$
there exists a bound quiver $(\Delta, \frakR)$ such that the
category $\mod \Lambda$ of $\Lambda$-modules is equivalent to the
category of modules over the path algebra of $(\Delta, \frakR)$.
In addition, $\Delta$ is uniquely (up to isomorphism) determined
by $\Lambda$ and we call it the Gabriel quiver of $\Lambda$. An
algebra $\Lambda$ is called triangular if its Gabriel quiver
contains no oriented cycles. From now on we assume that all
considered algebras are path algebras of bound quivers.

Let $\Delta$ be a quiver. A collection $M = (M_x, M_\alpha)_{x \in
\Delta_0, \alpha \in \Delta_1}$ of finite dimensional vector
spaces $M_x$, $x \in \Delta_0$, and linear maps $M_\alpha :
M_{s_\alpha} \to M_{t_\alpha}$, $\alpha \in \Delta_1$, is called a
representation of $\Delta$. If $M$ and $N$ are representations of
$\Delta$, then the morphism space $\Hom_\Delta (M, N)$ consists of
the collections $f = (f_x)_{x \in \Delta_0}$ of linear maps $f_x :
M_x \to N_x$, $x \in \Delta_0$, such that $N_{\alpha} f_{s_\alpha}
= f_{t_\alpha} M_\alpha$ for each $\alpha \in \Delta_1$. If
$\sigma = \alpha_1 \cdots \alpha_n$, for $n \in \bbN_+$ and
$\alpha_1, \ldots, \alpha_n \in \Delta_1$, is a path, then for a
representation $M$ of $\Delta$ we put $M_\sigma = M_{\alpha_1}
\cdots M_{\alpha_n}$. Similarly, if $\rho = \lambda_1 \sigma_1 +
\cdots + \lambda_n \sigma_n$, for $n \in \bbN_+$, $\lambda_1,
\ldots, \lambda_n \in k \setminus \{ 0 \}$, and pairwise different
paths $\sigma_1$, \ldots, $\sigma_n$ in $\Delta$, is a relation,
then for a representation $M$ of $\Delta$ we put $M_\rho =
\lambda_1 M_{\sigma_1} + \cdots + \lambda_n M_{\sigma_n}$.

Let $\Lambda$ be the path algebra of a bound quiver $(\Delta,
\frakR)$. By $\rep (\Delta, \frakR)$ we denote the full
subcategory of the category of representations of $\Delta$
consisting of the representations $M$ such that $M_{\rho} = 0$ for
each $\rho \in \frakR$. The assignment which assigns to a
$\Lambda$-module $M$ the representation $(M_x, M_\alpha)_{x \in
\Delta_0, \alpha \in \Delta_1}$, where $M_x = x M$ for $x \in
\Delta_0$ and $M_\alpha (m) = \alpha m$ for $\alpha \in \Delta_1$
and $m \in M_{s_\alpha}$, induces an equivalence between $\mod
\Lambda$ and $\rep (\Delta, \frakR)$ (see for
example~\cite{AssemSimsonSkowronski2006}*{Theorem~III.1.6}). We
will treat this equivalence as identification. With every $M \in
\rep (\Delta, \frakR)$ (hence also with every $\Lambda$-module) we
associate its dimension vector $\bdim M \in \bbN^{\Delta_0}$
defined by $(\bdim M)_x = \dim_k M_x$ for $x \in \Delta_0$. We
call the elements of $\bbN^{\Delta_0}$ dimension vectors.

Let $\Lambda$ be an algebra of finite global dimension with the
Gabriel quiver $\Delta$. We define the homomological bilinear form
$\langle -, - \rangle_\Lambda : \bbZ^{\Delta_0} \times
\bbZ^{\Delta_0} \to \bbZ$ and the Euler quadratic form
$\chi_\Lambda : \bbZ^{\Delta_0} \to \bbZ$ by
\[
\langle \bd', \bd'' \rangle_\Lambda = \sum_{n \in \bbN} \sum_{x, y
\in \Delta_0} (-1)^n d_x' d_y'' \dim_k \Ext_\Lambda^n (S (x), S
(y))
\]
and $\chi_\Lambda (\bd) = \langle \bd, \bd \rangle_\Lambda$ for
$\bd', \bd'', \bd \in \bbZ^{\Delta_0}$, where for $x \in \Delta_0$
we put $P (x) = \Lambda x$ and $S (x) = P (x) / \rad P (x)$. It
follows by easy induction that
\[
\langle \bdim M, \bdim N \rangle_\Lambda = \sum_{n \in \bbN}
(-1)^n \dim_k \Ext_\Lambda^n (M, N)
\]
for all $\Lambda$-modules $M$ and $N$. Note that $P (x)$, $x \in
\Delta_0$, form a complete set of pairwise nonisomorphic
indecomposable projective $\Lambda$-modules.

Let $\Lambda$ be the path algebra of a bound quiver $(\Delta,
\frakR)$. By a convex subalgebra of $\Lambda$ we mean every
algebra of the form $k \Delta' / \langle \frakR' \rangle$, where
$\Delta'$ is a convex subquiver of $\Delta$ and $\frakR' = \frakR
\cap k \Delta'$. If $\Lambda'$ is a convex subalgebra of $\Lambda$
determined by a subquiver $\Delta'$ of $\Delta$, then we may
identify the $\Lambda'$-modules with the $\Lambda$-modules $M$
such that $\Hom_\Lambda (P (x), M) = 0$ for each $x \in \Delta_0
\setminus \Delta_0'$. Moreover, $\Ext_{\Lambda'}^n (M, N) =
\Ext_\Lambda^n (M, N)$ for all $\Lambda'$-modules $M$ and $N$ and
$n \in \bbN$. Consequently, $\gldim \Lambda' < \infty$ provided
$\gldim \Lambda < \infty$ and, if this is the case, $\langle \bd',
\bd'' \rangle_{\Lambda'} = \langle \bd', \bd'' \rangle_\Lambda$
for all $\bd', \bd'' \in \bbZ^{\Delta_0'}$.

An algebra $\Lambda$ is called shod if either $\pd_\Lambda X \leq
1$ or $\id_\Lambda X \leq 1$ for each indecomposable
$\Lambda$-module $X$. This condition implies in particular that
$\gldim \Lambda \leq
3$~\cite{HappelReitenSmalo1996a}*{Proposition~II.1.1}. A shod
algebra $\Lambda$ is called strict shod if $\gldim \Lambda = 3$.

Let $\Lambda$ be a strict shod algebra with the Gabriel quiver
$\Delta$. By $\calL_\Lambda$ we denote the class of indecomposable
$\Lambda$-modules $X$ such that $\pd_\Lambda Y \leq 1$ for each
indecomposable $\Lambda$-module $Y$ such that there exists a
sequence $Y = X_0, \ldots, X_n = X$ such that $\Hom_\Lambda (X_{i
- 1}, X_i) \neq 0$ for each $i \in [1, n]$. Obviously,
$\pd_\Lambda X \leq 1$ for each $X \in \calL_\Lambda$. An
indecomposable $\Lambda$-module $X$ if called $\Ext$-injective in
$\calL_\Lambda$ if $X \in \calL_\Lambda$ and $\Ext_\Lambda^1 (Y,
X) = 0$ for each $Y \in \calL_\Lambda$. By $J_\Lambda$ we denote
the direct sum of a complete set of pairwise nonisomorphic
$\Ext$-injective modules in $\calL_\Lambda$. Let $\Delta'$ denote
the full subquiver of $\Delta$ such that $x \in \Delta_0'$ if and
only if $P (x) \in \calL_\Lambda$. Then $\Delta'$ is a convex
subquiver of $\Delta$ and we denote by $\Lambda_\lambda$ the
convex subalgebra of $\Lambda$ determined $\Delta'$. Dually, we
define $\calR_\Lambda$, $Q_\Lambda$ and $\Lambda_\rho$, and we put
$\calP_\Lambda = \calR_\Lambda \setminus \calL_\Lambda$. We know
from~\cite{CoelhoLanzilotta1999}*{Theorem} that if $X$ is an
indecomposable $\Lambda$-module, then either $X \in \calL_\Lambda$
or $X \in \calR_\Lambda$.

For a module $M$ over an algebra $\Lambda$ let $\add M$ denotes
the full subcategory of $\mod \Lambda$ formed by all direct sums
of direct summands of $M$. We define $\add \calC$ for a class
$\calC$ of $\Lambda$-modules similarly. The following proposition
collects the properties of the strict shod algebras which will be
needed in our proofs.

\begin{proposition} \label{proposition_shod}
Let $\Lambda$ be a strict shod algebra.
\renewcommand{\labelenumi}{(\theenumi)}
\begin{enumerate}

\item \label{claim1}
$\Hom_\Lambda (Y, X) = 0$ and $\Ext_\Lambda^n (X, Y) = 0$ for all
$n \in \bbN_+$, $X \in \calL_\Lambda$, and $Y \in \calP_\Lambda$.

\item \label{claim2}
If $\Hom_\Lambda (J_\Lambda \oplus Q_\Lambda, X) \neq 0 \neq
\Hom_\Lambda (X, J_\Lambda \oplus Q_\Lambda)$ for an
indecomposable $\Lambda$-module $X$, then $X \in \add (J_\Lambda
\oplus Q_\Lambda)$.

\item \label{claim3}
If $X \in \add (J_\Lambda \oplus Q_\Lambda)$, then $\End_\Lambda
(X) \simeq k$ and $\Ext_\Lambda^n (X, X) = 0$ for each $n \in
\bbN_+$.

\item \label{claim4}
$\gldim \Lambda_\lambda \leq 2$ and $\gldim \Lambda_\rho \leq 2$.

\item \label{claim5}
If $M \in \mod \Lambda$, then $M \in \add \calL_\Lambda$ if and
only if $\Ext_\Lambda^1 (M, J_\Lambda) = 0$ and $\Hom_\Lambda (P,
M) = 0$ for each indecomposable projective $\Lambda$-module $P$
such that $P \not \in \calL_\Lambda$.

\item \label{claim6}
If $M \in \mod \Lambda$, then $M \in \add \calP_\Lambda$ if and
only if $\Hom_\Lambda (M, J_\Lambda) = 0$.

\end{enumerate}
\end{proposition}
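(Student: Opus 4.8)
The plan is to derive the six assertions from the definitions of $\calL_\Lambda$, $\calR_\Lambda$, $J_\Lambda$, $Q_\Lambda$, from elementary Auslander--Reiten theory, and from the structural analysis of the canonical tilting module in~\cite{CoelhoHappelUnger2002} (which builds on~\cite{CoelhoLanzilotta1999}), proving the parts in an order in which the later ones may use the earlier ones. I would begin with~\eqref{claim1}: a nonzero map $Y \to X$ with $X \in \calL_\Lambda$ would make $Y$ a predecessor of $X$ and hence put $Y$ in $\calL_\Lambda$ (by the definition of $\calL_\Lambda$ as a class closed under predecessors), contradicting $Y \in \calP_\Lambda = \calR_\Lambda \setminus \calL_\Lambda$, so $\Hom_\Lambda (Y, X) = 0$. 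Since $\pd_\Lambda X \leq 1$, one gets $\Ext_\Lambda^n (X, Y) = 0$ for $n \geq 2$ for free; for $n = 1$ we may assume $X$ non-projective and use the Auslander--Reiten formula $\Ext_\Lambda^1 (X, Y) \simeq D \Hom_\Lambda (Y, \tau X)$, valid because $\pd_\Lambda X \leq 1$. In the almost split sequence $0 \to \tau X \to E \to X \to 0$ every indecomposable summand of $E$ is a predecessor of $X \in \calL_\Lambda$, hence lies in $\calL_\Lambda$, and $\tau X$ maps nonzero into one of them, so $\tau X \in \calL_\Lambda$; a nonzero map $Y \to \tau X$ would then again force $Y \in \calL_\Lambda$, a contradiction.

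Next I would prove~\eqref{claim4} by hand. As $\Lambda_\lambda$ is convex it suffices to bound $\pd_{\Lambda_\lambda} S (x)$ for $x \in \Delta_0'$. For such $x$ one has $P (x) \in \calL_\Lambda$, and $P (x)$ is supported on $\Delta_0'$, since $(P (x))_y \neq 0$ yields a nonzero map $P (y) \to P (x)$, whence $P (y) \in \calL_\Lambda$ and $y \in \Delta_0'$; consequently $\rad P (x)$, together with its syzygies, is supported on $\Delta_0'$. Every indecomposable summand of $\rad P (x)$ is a predecessor of $P (x) \in \calL_\Lambda$, hence lies in $\calL_\Lambda$, so $\rad P (x) \in \add \calL_\Lambda$ and $\pd_\Lambda \rad P (x) \leq 1$; by convexity $\pd_{\Lambda_\lambda} \rad P (x) \leq 1$ as well, and $0 \to \rad P (x) \to P (x) \to S (x) \to 0$ then gives $\pd_{\Lambda_\lambda} S (x) \leq 2$, so $\gldim \Lambda_\lambda \leq 2$. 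Dually $\gldim \Lambda_\rho \leq 2$.

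For~\eqref{claim6} the implication ``$\Rightarrow$'' is~\eqref{claim1} applied to the indecomposable summands of $M$ and of $J_\Lambda$, the latter lying in $\calL_\Lambda$, so that $\Hom_\Lambda (\calP_\Lambda, \calL_\Lambda) = 0$ applies; the reverse implication rests on the fact, which I would take from~\cite{CoelhoHappelUnger2002}, that every indecomposable not in $\calP_\Lambda$ admits a nonzero homomorphism into $\add J_\Lambda$. For~\eqref{claim5} the conditions $\Hom_\Lambda (P, M) = 0$ for indecomposable projective $P \notin \calL_\Lambda$ just say that $M$ is supported on $\Delta_0'$, so ``$\Rightarrow$'' is immediate from the definitions of $\Delta'$ and $J_\Lambda$, while ``$\Leftarrow$'' is the statement, again from~\cite{CoelhoHappelUnger2002}, that the torsion class of the canonical tilting module $J_\Lambda \oplus (\text{indecomposable projectives not in } \calL_\Lambda)$ meets $\mod \Lambda_\lambda$ exactly in $\add \calL_\Lambda$.

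Finally~\eqref{claim3} and~\eqref{claim2}. Writing $X = X_J \oplus X_Q$ with $X_J \in \add J_\Lambda$ and $X_Q \in \add Q_\Lambda$, the vanishing $\Ext_\Lambda^1 (X_J, X_J) = 0$ is immediate from the $\Ext$-injectivity of $J_\Lambda$ in $\calL_\Lambda$ and $\Ext_\Lambda^n (X_J, X_J) = 0$ for $n \geq 2$ from $\pd_\Lambda X_J \leq 1$, dually for $X_Q$ using $\id_\Lambda X_Q \leq 1$; the mixed groups $\Ext_\Lambda^n (X_J, X_Q)$ and $\Ext_\Lambda^n (X_Q, X_J)$ vanish for $n \geq 2$ by these dimension bounds and for $n = 1$ by~\eqref{claim1} on the summands of $X_Q$ lying in $\calP_\Lambda$, the remaining summands (in $\calL_\Lambda \cap \calR_\Lambda$) being controlled by the description of that overlap in~\cite{CoelhoHappelUnger2002}; and $\End_\Lambda (X) \simeq k$ for indecomposable $X$ follows because the indecomposable summands of $J_\Lambda \oplus Q_\Lambda$ are bricks, which is part of the near-directedness of the canonical tilting (and cotilting) module established in~\cite{CoelhoHappelUnger2002} and~\cite{Bobinski2007preprint}. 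For~\eqref{claim2} I would distinguish the cases $X \in \calL_\Lambda$ and $X \in \calR_\Lambda$ and use~\eqref{claim1} to annihilate the ``wrong'' half of $\Hom_\Lambda (J_\Lambda \oplus Q_\Lambda, X)$ or of $\Hom_\Lambda (X, J_\Lambda \oplus Q_\Lambda)$, so that $X$ is squeezed by nonzero homomorphisms between two objects of $\add J_\Lambda$ (respectively $\add Q_\Lambda$), and then invoke the convexity of these subcategories along paths of homomorphisms to conclude $X \in \add (J_\Lambda \oplus Q_\Lambda)$. This convexity, together with the $\calL_\Lambda \cap \calR_\Lambda$ point in~\eqref{claim3}, is the step I expect to be the main obstacle and the one that genuinely requires the fine structure of the canonical tilting module; by comparison the homological content of~\eqref{claim1}, \eqref{claim3}, and~\eqref{claim4} is routine.
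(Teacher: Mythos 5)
Your direct arguments for~\eqref{claim1} and~\eqref{claim4} are correct and in fact reprove what the paper merely cites (its entire proof of this proposition is a list of references: \eqref{claim1} is ``obvious, use the Auslander--Reiten formula'', \eqref{claim4} is Skowro\'nski's Lemma~3.1, \eqref{claim2} is Assem--Coelho--Trepode, Proposition~3.4 and its dual, \eqref{claim5} and~\eqref{claim6} follow from Assem--Coelho--Trepode, Theorem~4.2, which identifies $\add \calL_\Lambda$ with the class cogenerated by the cotilting $\Lambda_\lambda$-module $J_\Lambda$, and the first part of~\eqref{claim3} is Coelho--Happel--Unger, Proposition~2.4). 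The easy implications of~\eqref{claim5} and~\eqref{claim6} are also fine, though in~\eqref{claim5} the relevant torsion theory is the one induced by $J_\Lambda$ as a cotilting module over $\Lambda_\lambda$, not the torsion class of the canonical tilting module over $\Lambda$. The substantive problems are in~\eqref{claim2} and~\eqref{claim3}.

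In~\eqref{claim3} your treatment of the mixed extension groups has the variance backwards: the bounds $\pd_\Lambda X_J \leq 1$ and $\id_\Lambda X_Q \leq 1$, together with~\eqref{claim1} (which gives $\Hom_\Lambda (\calP_\Lambda, \calL_\Lambda) = 0$ and $\Ext_\Lambda^n (\calL_\Lambda, \calP_\Lambda) = 0$), control $\Ext_\Lambda^n (X_J, X_Q)$ but say nothing about $\Ext_\Lambda^n (X_Q, X_J)$. For an indecomposable summand $Y$ of $Q_\Lambda$ lying in $\calP_\Lambda$ with $\pd_\Lambda Y \geq 2$ and a summand $E$ of $J_\Lambda$ with $\id_\Lambda E \geq 2$ --- neither of which is excluded for a strict shod algebra --- none of your ingredients kills $\Ext_\Lambda^n (Y, E)$ for $n \geq 1$; this is exactly where the structural results on $J_\Lambda \oplus Q_\Lambda$ from the literature are needed, and you neither supply nor correctly locate them. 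In~\eqref{claim2} the reduction to ``nonzero homomorphisms between two objects of $\add J_\Lambda$'' does not go through: for $X \in \calL_\Lambda$, part~\eqref{claim1} kills maps into $X$ from the $\calP_\Lambda$-part of $Q_\Lambda$, but the surviving nonzero map into $X$ may come from a summand of $Q_\Lambda$ lying in $\calL_\Lambda \cap \calR_\Lambda$ rather than from $\add J_\Lambda$, and the nonzero map out of $X$ may land in the $\calP_\Lambda$-part of $Q_\Lambda$, since $\Hom_\Lambda (\calL_\Lambda, \calP_\Lambda)$ need not vanish. The remaining ``convexity along paths'' that you defer to an unspecified statement and yourself flag as the main obstacle is precisely the content of Assem--Coelho--Trepode, Proposition~3.4 and its dual, which is what the paper invokes; as written, your argument for~\eqref{claim2} restates the difficulty rather than resolving it.
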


\begin{proof}
\eqref{claim1}~Obvious (one uses Auslander--Reiten formula, see
for example~\cite{ AssemSimsonSkowronski2006}*{Theorem~2.13}).

\eqref{claim2}~It follows
from~\cite{AssemCoelhoTrepode2004}*{Proposition~3.4} and its dual.

\eqref{claim3}~The first part follows for example from~\cite{
CoelhoHappelUnger2002}*{Proposition~2.4}, the rest is clear.

\eqref{claim4}~See~\cite{Skowronski2003}*{Lemma~3.1}.

\eqref{claim5}~The latter part of the condition says that every $X
\in \calL_\Lambda$ is a $\Lambda_\lambda$-module, which is
obvious. Now the claim follows from the dual
of~\cite{AssemSimsonSkowronski2006}*{Theorem~VI.2.5}, since
$J_\Lambda$ is a cotilting $\calL_\Lambda$-module and $\add
\calL_\Lambda$ coincides with the class of $\Lambda$-modules
(equivalently, $\Lambda_\lambda$-modules) cogenerated by
$J_\Lambda$~\cite{AssemCoelhoTrepode2004}*{Theorem~4.2}.

\eqref{claim6}~Since $X \in \calP_\Lambda$ if and only if $X \not
\in \calL_\Lambda$, this follows
from~\cite{AssemCoelhoTrepode2004}*{Theorem~4.2}.
\end{proof}

\section{Preliminaries on module varieties}

Let $\Lambda$ be the path algebra of a bound quiver $(\Delta,
\frakR)$ and $\bd \in \bbN^{\Delta_0}$. By the variety of
$\Lambda$-modules of dimension vector $\bd$ we mean the set
$\mod_\Lambda (\bd)$ consisting of all $M \in \rep (\Delta,
\frakR)$ such that $M_x = k^{d_x}$. By forgetting the spaces
$M_x$, $x \in \Delta$, we identify $\mod_\Lambda (\bd)$ with a
Zariski-closed subset of $\prod_{\alpha \in \Delta_1}
\bbM_{d_{t_\alpha} \times d_{s_\alpha}} (k)$. Observe that for
each $\Lambda$-module $N$ of dimension vector $\bd$ there exists
$M \in \mod_\Lambda (\bd)$ such that $M \simeq N$. We will usually
assume that all considered $\Lambda$-modules are points of module
varieties.

Let $\Lambda$ be an algebra, $\Delta$ its Gabriel quiver, and $\bd
\in \bbN^{\Delta_0}$. Then $\GL (\bd) = \prod_{x \in \Delta_0}
\GL_{d_x} (k)$ acts on $\mod_\Lambda (\bd)$ by conjugation: $(g
M)_{\alpha} = g_{t_\alpha} M_\alpha g_{s_\alpha}^{-1}$ for $g \in
\GL (\bd)$, $M \in \mod_\Lambda (\bd)$, and $\alpha \in \Delta_1$.
Observe that $M \simeq N$ for $M, N \in \mod_\Lambda (\bd)$ if and
only if $\calO (M) = \calO (N)$, where for $M \in \mod_\Lambda
(\bd)$ we denote by $\calO (M)$ its $\GL (\bd)$-orbit. One easily
calculates that $\dim \calO (M) = \dim \GL (\bd) - \dim_k
\End_\Lambda (M)$ for each $M \in \mod_\Lambda
(\bd)$~\cite{KraftRiedtmann1986}*{2.2}. Moreover, we have a
canonical injection
\begin{equation} \label{eq_inj}
T_M \mod_\Lambda (\bd) / T_M \calO (M) \hookrightarrow
\Ext_\Lambda^1 (M, M)
\end{equation}
where for a point $x$ of a variety $\calV$ we denote by $T_x
\calV$ the tangent space to $\calV$ at
$x$~\cite{Kraft1984}*{II.2.7, Satz~4}. Consequently,
\begin{equation} \label{eq_dim}
\dim_k T_M \mod_\Lambda (\bd) \leq \dim \GL (\bd) - \dim_k
\End_\Lambda (M) + \dim_k \Ext_\Lambda^1 (M, M).
\end{equation}

Let $M, N \in \mod_\Lambda (\bd)$ for $\bd \in \bbN^{\Delta_0}$.
We call $N$ a degeneration of $M$ and write $M \leq_{\deg} N$ if
$N \in \ol{\calO (M)}$. If $M \leq_{\deg} N$ and $M \not \simeq
N$, then we write $M <_{\deg} N$ and call $N$ a proper
degeneration of $M$. A degeneration $M \leq_{\deg} N$ is called
minimal if $M <_{\deg} N$ and there is no $L \in \mod_\Lambda
(\bd)$ such that $M <_{\deg} L <_{\deg} N$. The formula for the
dimension of an orbit implies that $\dim_k \End_\Lambda (M) <
\dim_k \End_\Lambda (N)$ provided $M <_{\deg} N$. More generally,
if $M \leq_{\deg} N$, then $\dim_k \Hom_\Lambda (M, X) \leq \dim_k
\Hom_\Lambda (N, X)$ and $\dim_k \Hom_\Lambda (X, M) \leq \dim_k
\Hom_\Lambda (X, N)$ for all $\Lambda$-modules
$X$~\cite{Riedtmann1986b}*{Proposition~2.1}. On the other hand, if
there exists an exact sequence $0 \to N_1 \to M \to N_2 \to 0$
such that $N_1 \oplus N_2 \simeq N$, then $M \leq_{\deg} N$ (see
for example~\cite{Bongartz1996}*{Lemma~1.1}).

Let $\calV$ be a variety and $x \in \calV$. We say that $x$ is a
regular point of $\calV$ if $\dim_k T_x \calV = \dim_x \calV$
(equivalently, $\dim_k T_x \calV \leq \dim_x \calV$), where
$\dim_x \calV$ denotes the maximum of the dimensions of the
irreducible components of $\calV$ passing through $x$. In
particular, if $\calV$ is irreducible, then $x$ is a regular point
of $\calV$ if and only if $\dim_k T_x \calV = \dim \calV$
(equivalently, $\dim_k T_x \calV \leq \dim \calV$). We say that
$\calV$ is regular in codimension one if the codimension of the
complement of the set of regular points of $\calV$ is at least
$2$.

Now we show a special case of a more general fact proved by
Gei\ss~\cite{Geiss1996a}. We start with the following lemma, where
for a dimension vector $\bd$ we put $a_\Lambda (\bd) = \dim \GL
(\bd) - \chi_\Lambda (\bd)$.

\begin{lemma} \label{lemma_tangent}
Let $\Lambda$ be an algebra of finite global dimension. If $N$ is
a $\Lambda$-module of dimension vector $\bd$ such that
$\Ext_\Lambda^n (N, N) = 0$ for each $n \in \bbN_+$ with $n \geq
2$, then $\dim T_N \mod_\Lambda (\bd) \leq a_\Lambda (\bd)$.
\end{lemma}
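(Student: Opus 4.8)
The plan is to bound $\dim_k T_N \mod_\Lambda(\bd)$ using the general inequality~\eqref{eq_dim} and to express the right-hand side in terms of the Euler form. First I would recall that $\dim \calO(N) = \dim\GL(\bd) - \dim_k\End_\Lambda(N)$ and that~\eqref{eq_inj} gives the canonical injection $T_N\mod_\Lambda(\bd)/T_N\calO(N)\hookrightarrow\Ext_\Lambda^1(N,N)$, so that
\[
\dim_k T_N\mod_\Lambda(\bd) \leq \dim\GL(\bd) - \dim_k\End_\Lambda(N) + \dim_k\Ext_\Lambda^1(N,N).
\]
This is exactly~\eqref{eq_dim}. The goal is then to show the right-hand side is at most $a_\Lambda(\bd) = \dim\GL(\bd) - \chi_\Lambda(\bd)$, i.e. that
\[
\chi_\Lambda(\bd) \leq \dim_k\End_\Lambda(N) - \dim_k\Ext_\Lambda^1(N,N) = \dim_k\Hom_\Lambda(N,N) - \dim_k\Ext_\Lambda^1(N,N).
\]

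The second step is to use the homological interpretation of the Euler form. Since $\Lambda$ has finite global dimension, the excerpt records that $\langle\bdim M,\bdim N\rangle_\Lambda = \sum_{n\in\bbN}(-1)^n\dim_k\Ext_\Lambda^n(M,N)$ for all modules $M,N$, and in particular
\[
\chi_\Lambda(\bd) = \langle\bdim N,\bdim N\rangle_\Lambda = \sum_{n\in\bbN}(-1)^n\dim_k\Ext_\Lambda^n(N,N).
\]
Now I would invoke the hypothesis $\Ext_\Lambda^n(N,N)=0$ for all $n\geq 2$. Because $\gldim\Lambda<\infty$ the sum is finite anyway, and under this hypothesis only the $n=0$ and $n=1$ terms survive, giving
\[
\chi_\Lambda(\bd) = \dim_k\Hom_\Lambda(N,N) - \dim_k\Ext_\Lambda^1(N,N).
\]
Substituting this identity into~\eqref{eq_dim} yields precisely $\dim_k T_N\mod_\Lambda(\bd) \leq \dim\GL(\bd) - \chi_\Lambda(\bd) = a_\Lambda(\bd)$, which is the assertion.

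There is essentially no hard obstacle here: the statement is a direct combination of the tangent-space estimate~\eqref{eq_dim} with the Euler-form identity, once the vanishing of the higher self-extensions collapses the alternating sum to its first two terms. The only point requiring a word of care is that $\dim_k T_N\mod_\Lambda(\bd)$ and $\dim T_N\mod_\Lambda(\bd)$ denote the same quantity (the dimension of the tangent space as a $k$-vector space), so the conclusion $\dim T_N\mod_\Lambda(\bd)\leq a_\Lambda(\bd)$ in the stated form is immediate. I would also remark, for later use, that equality $\Ext_\Lambda^2(N,N)\neq 0$ is what would break the argument — this is the sense in which $\Ext_\Lambda^2(N,N)$ measures the failure of regularity alluded to in the introduction — but for the lemma itself the hypothesis rules that out and nothing further is needed.
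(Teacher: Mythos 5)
Your argument is correct and is exactly the paper's argument: the paper's proof consists of the single sentence that the lemma follows immediately from~\eqref{eq_dim}, and the details you supply (collapsing the alternating sum for $\chi_\Lambda(\bd)$ to $\dim_k\End_\Lambda(N)-\dim_k\Ext_\Lambda^1(N,N)$ using the vanishing of the higher self-extensions, then substituting into~\eqref{eq_dim}) are precisely the intended ones.
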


\begin{proof}
It follows immediately from~\eqref{eq_dim}.
\end{proof}

Observe that if $\Lambda$ is an algebra of finite global dimension
and $M$ a $\Lambda$-module such that $\Ext_\Lambda^n (M, M) = 0$
for each $n \in \bbN_+$, then $\dim \calO (M) = a_\Lambda (\bd)$.
Together with the above lemma this immediately implies the
following.

\begin{corollary} \label{corollary_regular}
Let $\Lambda$ be an algebra of finite global dimension, and $M$
and $N$ $\Lambda$-modules such that $\Ext_\Lambda^n (M, M) = 0$
for each $n \in \bbN_+$ and $\Ext_\Lambda^n (N, N) = 0$ for each
$n \in \bbN_+$ with $n \geq 2$. If $M \leq_{\deg} N$, then $N$ is
a regular point of $\ol{\calO (M)}$. \qed
\end{corollary}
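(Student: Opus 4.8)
The plan is to exhibit $\ol{\calO (M)}$ as an irreducible variety of dimension $a_\Lambda (\bd)$, where $\bd = \bdim M = \bdim N$, and then to bound the dimension of its Zariski tangent space at $N$ by the same number $a_\Lambda (\bd)$; by the criterion recalled above for a point of an irreducible variety to be regular, this is exactly what has to be checked.

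First I would compute the dimension of $\ol{\calO (M)}$. The orbit $\calO (M)$ is the image of the irreducible group $\GL (\bd)$, hence is irreducible, so $\ol{\calO (M)}$ is irreducible of the same dimension. Since $\Ext_\Lambda^n (M, M) = 0$ for every $n \in \bbN_+$, the homological interpretation of $\langle -, - \rangle_\Lambda$ gives $\chi_\Lambda (\bd) = \langle \bdim M, \bdim M \rangle_\Lambda = \dim_k \End_\Lambda (M)$, and hence, using $\dim \calO (M) = \dim \GL (\bd) - \dim_k \End_\Lambda (M)$, we obtain $\dim \ol{\calO (M)} = \dim \calO (M) = \dim \GL (\bd) - \chi_\Lambda (\bd) = a_\Lambda (\bd)$. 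This is precisely the observation recorded just before the statement.

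Next I would handle the tangent space. From $M \leq_{\deg} N$ we have $N \in \ol{\calO (M)}$, and $\ol{\calO (M)}$ is a closed subvariety of $\mod_\Lambda (\bd)$, so $T_N \ol{\calO (M)} \subseteq T_N \mod_\Lambda (\bd)$. The hypothesis $\Ext_\Lambda^n (N, N) = 0$ for every $n \in \bbN_+$ with $n \geq 2$ lets me apply Lemma~\ref{lemma_tangent} to $N$, giving $\dim_k T_N \mod_\Lambda (\bd) \leq a_\Lambda (\bd)$, whence $\dim_k T_N \ol{\calO (M)} \leq a_\Lambda (\bd) = \dim \ol{\calO (M)}$. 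As $\ol{\calO (M)}$ is irreducible, this says exactly that $N$ is a regular point of $\ol{\calO (M)}$.

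Everything here is formal once Lemma~\ref{lemma_tangent} is available; the only place where the hypotheses on $M$ (rather than just on $N$) enter is the equality $\dim \ol{\calO (M)} = a_\Lambda (\bd)$, so there is no genuine obstacle — the statement is an immediate consequence of the lemma together with the Euler-form computation above.
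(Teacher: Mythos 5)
Your proof is correct and is essentially the argument the paper intends: the remark preceding the corollary gives $\dim \ol{\calO (M)} = a_\Lambda (\bd)$ from the $\Ext$-vanishing for $M$, and Lemma~\ref{lemma_tangent} applied to $N$ bounds $\dim_k T_N \ol{\calO (M)} \leq \dim_k T_N \mod_\Lambda (\bd) \leq a_\Lambda (\bd)$, which suffices since $\ol{\calO (M)}$ is irreducible. Your write-up merely makes explicit the Euler-form computation $\chi_\Lambda (\bd) = \dim_k \End_\Lambda (M)$ that the paper leaves implicit.
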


Let $\Lambda$ be an algebra and $\bd'$, $\bd''$ dimension vectors.
For $d \in \bbN$ we denote by $\calE_d^{\bd', \bd''}$ the set of
all $(U, V) \in \mod_\Lambda (\bd') \times \mod_\Lambda (\bd'')$
such that $\Hom_\Lambda (V, U) = 0$ and $\dim_k \Ext_\Lambda^1 (V,
U) = d$. By extending the arguments
from~\cite{Bobinski2008preprint}*{(2.7)} in an obvious way one
shows that if $\gldim \Lambda < \infty$ and $(U, V) \in
\calE_d^{\bd', \bd''}$, then
\begin{equation} \label{eq_calE}
\dim_k T_{U, V} \calE_d^{\bd', \bd''} \leq a_\Lambda (\bd') +
a_\Lambda (\bd'') - \dim_k \Ext_\Lambda^2 (V, U)
\end{equation}
provided the following conditions are satisfied: $\Ext_\Lambda^n
(U, U) = 0 = \Ext_\Lambda^n (V, V)$ for each $n \in \bbN_+$ with
$n \geq 2$ and there exists an exact sequence $0 \to U \to M \to V
\to 0$ such that either $\Ext_\Lambda^2 (V, M) = 0$ or
$\Ext_\Lambda^2 (M, U) = 0$.

For the rest of the section we assume that $\Lambda$ is a strict
shod algebra, $\calL = \calL_\Lambda$, and $\calP =
\calP_\Lambda$. For a dimension vector $\bd$ we denote by $\calL
(\bd)$ the set of all $M \in \mod_\Lambda (\bd)$ such that $M \in
\add \calL$. We define $\calP (\bd)$ similarly. More generally, if
$\bd'$ and $\bd''$ are dimensions vectors, then $\calL (\bd')
\oplus \calP (\bd'')$ denotes the set of all $M \in \mod_\Lambda
(\bd' + \bd'')$ such that $M \simeq U \oplus V$ for some $U \in
\calL (\bd')$ and $V \in \calP (\bd'')$.

Let $\bd$ be a dimension vector. It follows from
Proposition~\ref{proposition_shod}~\eqref{claim5}
and~\eqref{claim6} that $\calL (\bd)$ and $\calP (\bd)$ are open
subsets of $\mod_\Lambda (\bd)$. Moreover, if $\calL (\bd) \neq
\varnothing$, then it is irreducible of dimension $a_\Lambda
(\bd)$~\cite{BarotSchroer2001}*{Proposition~3.1} (for the last
statement one uses that $a_\Lambda (\bd) = a_{\Lambda_\lambda}
(\bd)$ and Proposition~\ref{proposition_shod}~\eqref{claim4}).
This implies that $\dim_k T_M \mod_\Lambda (\bd) = a_\Lambda
(\bd)$ for each $M \in \calL (\bd)$, according to
Lemma~\ref{lemma_tangent}. The analogous statement holds for
$\calP (\bd)$. Finally, if $\bd'$ and $\bd''$ are dimension
vectors such that $\calL (\bd')$ and $\calP (\bd'')$ are nonempty,
then $\calL (\bd') \oplus \calP (\bd'')$ is an irreducible
constructible subset of $\mod_\Lambda (\bd' + \bd'')$.

\section{Proof of the main result}

This section is devoted for proving the main result of the paper.
The line of the proof mainly follows that of the proof
of~\cite{Bobinski2007preprint}*{Main Theorem}, however some
arguments have to be adapted to the setting of strict shod
algebras.

Throughout this section $\Lambda$ is a fixed strict shod algebra,
$T$ the canonical tilting $\Lambda$-module, $\calL =
\calL_\Lambda$, $J = J_\Lambda$, $\calP = \calP_\Lambda$, and $Q =
Q_\Lambda$. Finally, we fix a $\Lambda$-module $M \in \add T$.
Observe that $M \in \add (J \oplus Q)$. We write $M = L \oplus R$
with $L \in \add \calL$ and $R \in \add \calR$, and put $\bd =
\bdim M$.

The main aim of this section is to prove the following extension
of Main Theorem.

\begin{theorem} \label{theorem}
$\ol{\calO (M)}$ is regular in codimension one.
\end{theorem}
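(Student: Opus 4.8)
The plan is to show that the complement of the regular locus of $\ol{\calO(M)}$ has codimension at least two, and since $\ol{\calO(M)}$ is irreducible it suffices to exhibit, for every $N \in \ol{\calO(M)}$ that is \emph{not} regular, a subvariety through $N$ of dimension $\dim \ol{\calO(M)} - 2$ or smaller which contains all nearby non-regular points. Following the blueprint of~\cite{Bobinski2007preprint}*{Main Theorem}, the first step is to reduce to minimal degenerations: any non-regular point $N$ satisfies $M <_{\deg} N$, and one may pick a minimal degeneration $N' \leq_{\deg} N$ with $N'$ non-regular as well, so it is enough to bound the loci of minimal degenerations. The structural input comes from Proposition~\ref{proposition_shod}: writing $M = L \oplus R$ with $L \in \add\calL$, $R \in \add\calP$ (using $M \in \add(J \oplus Q)$), part~\eqref{claim1} gives $\Hom_\Lambda(R, L) = 0$ and $\Ext_\Lambda^n(L, R) = 0$ for $n \geq 1$, part~\eqref{claim3} gives $\End_\Lambda(M) \simeq k^{\text{(number of summands)}}$ and $\Ext_\Lambda^n(M,M) = 0$ for all $n \geq 1$, so $\dim\calO(M) = a_\Lambda(\bd)$, and by Corollary~\ref{corollary_regular} a degeneration $N$ is automatically regular as soon as $\Ext_\Lambda^n(N,N) = 0$ for all $n \geq 2$. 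Hence a non-regular $N$ must have $\Ext_\Lambda^2(N,N) \neq 0$.

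Next I would analyze such $N$ by decomposing it compatibly with the torsion pair coming from $\calL$ and $\calP$. Because $M = L \oplus R$ with $\Hom_\Lambda(R,L) = 0$ and $\Ext_\Lambda^1(R,L) = 0$, every module in $\ol{\calO(M)}$ fits into a short exact sequence $0 \to U \to N \to V \to 0$ with $U$ a degeneration of $L$ inside $\calL$-modules and $V$ a degeneration of $R$ inside $\calP$-modules — here one uses that $\calL(\bd')$ and $\calP(\bd'')$ are open (Proposition~\ref{proposition_shod}~\eqref{claim5},\eqref{claim6}) and that $\Hom_\Lambda(V,U) = 0$ by part~\eqref{claim1}. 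Thus $N$ lies in (the image of a map from) the variety $\calE_d^{\bd', \bd''}$ of such extensions, for appropriate $d = \dim_k\Ext_\Lambda^1(V,U)$ and $\bd' + \bd'' = \bd$. The point is that $U \in \calL(\bd')$ forces $\Ext_\Lambda^n(U,U) = 0$ for $n \geq 2$ (since $\pd_\Lambda U \leq 1$), and dually $\id_\Lambda V \leq 1$ gives $\Ext_\Lambda^n(V,V) = 0$ for $n \geq 2$; moreover the exact sequence $0 \to L \to M \to R \to 0$ itself (if $M$ genuinely mixes $L$ and $R$, otherwise the argument is easier) or the sequence realizing $N$ provides the vanishing $\Ext_\Lambda^2(V,M) = 0$ or $\Ext_\Lambda^2(M,U) = 0$ needed to apply the tangent-space estimate~\eqref{eq_calE}. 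That estimate then yields
\[
\dim_k T_N \ol{\calO(M)} \leq \dim_k T_{U,V}\calE_d^{\bd',\bd''} + (\text{bundle fibre correction}) \leq a_\Lambda(\bd') + a_\Lambda(\bd'') - \dim_k\Ext_\Lambda^2(V,U) + \dim\GL(\bd) - \dim_k\Hom_\Lambda(M,M),
\]
and one must compare this with $\dim\ol{\calO(M)} = a_\Lambda(\bd)$; using $\chi_\Lambda(\bd) = \chi_\Lambda(\bd') + \chi_\Lambda(\bd'') + \langle \bdim V, \bdim U\rangle_\Lambda - \dim_k\Ext_\Lambda^1(V,U) - \dim_k\Ext_\Lambda^2(V,U)$ together with $\Hom_\Lambda(V,U) = 0$ and $\Ext_\Lambda^1(V,U) = d$, this bookkeeping collapses to $\dim_k T_N\ol{\calO(M)} - \dim\ol{\calO(M)} \leq (\text{codimension of the stratum containing }N) - 2\dim_k\Ext_\Lambda^2(V,U)$ or similar, so non-regular points, where $\Ext_\Lambda^2(V,U) \neq 0$, are confined to a closed subset of codimension $\geq 2$.

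The main obstacle, as usual in this circle of ideas, is controlling the passage from the extension variety $\calE_d^{\bd',\bd''}$ (together with its $\GL$-action) to the orbit closure $\ol{\calO(M)}$ — that is, verifying that the natural morphism $\GL(\bd) \times^{P} \calE_d^{\bd',\bd''} \to \mod_\Lambda(\bd)$ actually dominates $\ol{\calO(M)}$ with fibres of the expected dimension, and that the non-regular locus is genuinely pulled back from the locus $\{\Ext_\Lambda^2(V,U) \neq 0\}$ rather than introduced by the map itself. This requires knowing that $M$ itself, being a summand of $T^n$, does degenerate only to modules of the above split-compatible form, which is where one needs the full force of Proposition~\ref{proposition_shod}~\eqref{claim2} (any indecomposable with maps both into and out of $J \oplus Q$ is itself in $\add(J\oplus Q)$) to exclude "new" indecomposables appearing in a degeneration. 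A secondary technical point is checking the hypothesis of~\eqref{eq_calE} about existence of a short exact sequence $0 \to U \to M' \to V \to 0$ with the appropriate $\Ext^2$-vanishing: one takes $M'$ to be $M$ itself when $L$ and $R$ are both non-zero, and handles the pure cases $M \in \add\calL$ or $M \in \add\calP$ separately, where $N$ stays in the irreducible variety $\calL(\bd)$ or $\calP(\bd)$ and regularity is immediate from Lemma~\ref{lemma_tangent}. Once these geometric identifications are in place, the dimension count is the routine part and closes the argument.
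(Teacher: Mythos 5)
Your overall blueprint is the right one, but two steps as written would fail. First, the reduction to minimal degenerations is stated backwards: from a non-regular point $N$ you cannot ``pick a minimal degeneration $N'\leq_{\deg}N$ with $N'$ non-regular as well'' --- there is no reason non-regularity should descend along $M<_{\deg}N'\leq_{\deg}N$. The correct reduction (and the one the paper uses) is to fix an irreducible component $\calX$ of $\ol{\calO(M)}\setminus\calO(M)$ and show that a dense open subset of $\calX$ consists of regular points of $\ol{\calO(M)}$; one checks that the points of $\calX$ minimizing $\dim_k\End_\Lambda(-)$ and $\dim_k\Ext^1_\Lambda(-,-)$ and lying in no other component form such an open set and are automatically minimal degenerations. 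Second, and more seriously, your case division is wrong. The dichotomy is not ``mixed $M$'' versus ``pure $M\in\add\calL$ or $M\in\add\calP$'', but rather $\bdim U\neq\bdim L$ versus $\bdim U=\bdim L$ for the decomposition $N\simeq U\oplus V$ of a minimal degeneration. Only in the first case does Proposition~\ref{prop_min}~\eqref{point1} produce the exact sequence $0\to U\to M\to V\to 0$ (via a nontrivial push-out/pull-back argument using Proposition~\ref{proposition_shod}~\eqref{claim1} and minimality, which you do not supply) that feeds the estimate~\eqref{eq_calE}. In the second case no such sequence need exist, the extension-variety bound is unavailable, and the paper instead shows that one of $U\simeq L$ or $V\simeq R$ holds, that the other summand is a regular point of the orbit closure of the corresponding summand of $M$ (Corollary~\ref{corollary_regular}, using $\id_\Lambda V\leq 1$), and then invokes Bongartz's cancellation theorem for minimal singularities~\cite{Bongartz1994}*{Theorem~2} together with the $\Hom$-vanishing $\Hom_\Lambda(V,L)=0$. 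This entire branch, and the appeal to Bongartz, is absent from your proposal.

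Smaller points: the structural claim that a degeneration of $M$ splits as $U\oplus V$ with $U\in\add\calL$, $V\in\add\calP$ is proved in the paper only for \emph{minimal} degenerations, via Zwara's theorem~\cite{Zwara2000} combined with Proposition~\ref{proposition_shod}~\eqref{claim2} and~\eqref{claim3}; your phrasing ``every module in $\ol{\calO(M)}$ fits into a short exact sequence $0\to U\to N\to V\to 0$'' both overreaches and misplaces the extension (it is $M$, not $N$, that is the middle term of the relevant non-split sequence, while $N$ is the split sum). Finally, your closing dimension count, with its ``bundle fibre correction'' and the inequality ending in ``$-2\dim_k\Ext^2_\Lambda(V,U)$'', does not match any computation that closes the argument; the actual mechanism is that the tangent-space bound for $\calV\cap\ol{\calO(M)}$ coming from~\eqref{eq_calE}, combined with~\eqref{eq_dim} and $\Ext^2_\Lambda(N,N)=\Ext^2_\Lambda(V,U)$, $\pd_\Lambda N\leq 2$, gives $\dim_k T_N\ol{\calO(M)}\leq a_\Lambda(\bd)=\dim\ol{\calO(M)}$ directly, so $N$ is regular.
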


We start with the following.

\begin{lemma}
If $N$ is a minimal degeneration of $M$, then there exists an
exact sequence $0 \to U \to M \to V \to 0$ such that $N \simeq U
\oplus V$.
\end{lemma}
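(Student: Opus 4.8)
The plan is to use the structure theory of strict shod algebras from Proposition~\ref{proposition_shod} to analyse a minimal degeneration $M \leq_{\deg} N$ of the special module $M = L \oplus R \in \add(J \oplus Q)$. By the Riedtmann--Zwara type description of minimal degenerations one expects that $N$ arises from a single non-split short exact sequence with middle term $M$; so the first task is to produce an exact sequence $0 \to U \to M \to V \to 0$ with $N \simeq U \oplus V$. I would begin by recalling the general fact (see for example Zwara's work, in the spirit of the cited results of Bongartz and Riedtmann) that a minimal degeneration $M <_{\deg} N$ always comes from an exact sequence $0 \to U' \to M \to V' \to 0$ with $N \simeq U' \oplus V'$, \emph{provided} $M$ has a sufficiently rigid endomorphism ring or satisfies a suitable $\Ext^1$-vanishing condition; here $\Ext_\Lambda^n(M,M) = 0$ for all $n \in \bbN_+$ and $\End_\Lambda(M)$ is a product of copies of $k$ by Proposition~\ref{proposition_shod}~\eqref{claim3}, which is exactly the hypothesis that makes such ``minimal degenerations are given by short exact sequences'' theorems applicable.

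Concretely, the key steps would be: first, invoke the known result that for a module $M$ with $\Ext_\Lambda^1(M,M) = 0$, every minimal degeneration $N$ of $M$ fits into a short exact sequence $0 \to U \to M \to V \to 0$ with $N \simeq U \oplus V$ and $U$, $V$ proper nonzero summands-of-sorts (this is Zwara's theorem on minimal degenerations in the rigid case). Second, one checks the hypotheses of that theorem are met: $\Ext_\Lambda^1(M,M) = 0$ by Proposition~\ref{proposition_shod}~\eqref{claim3}, since $M \in \add(J \oplus Q)$. Third, one transports the exact sequence from $\mod \Lambda$ back to the module variety $\mod_\Lambda(\bd)$, which is routine since $\bdim U + \bdim V = \bdim M = \bd$ and every module of a given dimension vector is realized as a point. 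The main obstacle is pinning down and correctly citing the precise form of the ``minimal degeneration via short exact sequence'' theorem that applies under the rigidity hypothesis $\Ext^1_\Lambda(M,M)=0$ rather than under the stronger hypothesis that $M$ is, say, a tilting module over $\Lambda$ itself; one must be careful that the module $M$ here is only a summand of a power of the canonical tilting module and lives over a strict shod algebra of global dimension $3$, so the higher $\Ext$ groups could in principle interfere — but $\Ext_\Lambda^n(M,M) = 0$ for \emph{all} $n \geq 1$ rescues the argument.

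An alternative, more self-contained route avoiding a heavy citation: take a minimal degeneration $M <_{\deg} N$, and use that $\dim_k \End_\Lambda(N) > \dim_k \End_\Lambda(M)$ together with the fact that $M \leq_{\deg} N$ forces $\Hom_\Lambda(M,X) \leq \Hom_\Lambda(N,X)$ and $\Hom_\Lambda(X,M) \leq \Hom_\Lambda(X,N)$ for all $X$ (Riedtmann's inequalities, quoted in the excerpt). Combined with $\End_\Lambda(M) \simeq k^{(\text{number of summands})}$ and the ``add-generation'' criteria of Proposition~\ref{proposition_shod}~\eqref{claim5},~\eqref{claim6}, one controls which indecomposables can appear in $N$; one then builds the exact sequence $0 \to U \to M \to V \to 0$ by hand, letting $V$ be the largest common ``quotient-type'' part. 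I would expect the cleanest writeup to still lean on the cited minimal-degeneration machinery, with Proposition~\ref{proposition_shod}~\eqref{claim3} doing the essential work of verifying rigidity; the subtlety to watch for is ensuring $U$ and $V$ are both genuinely nonzero (so the sequence is non-split and $N \not\simeq M$), which follows from $M <_{\deg} N$ being a \emph{proper} degeneration.
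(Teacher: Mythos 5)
There is a genuine gap, and it sits exactly at the point you flag as ``the main obstacle''. The theorem you want to invoke --- that for a module $M$ with $\Ext_\Lambda^1 (M, M) = 0$ every minimal degeneration of $M$ is realized by a short exact sequence $0 \to U \to M \to V \to 0$ with $N \simeq U \oplus V$ --- is not a known result in that generality, and the paper does not use any such statement. What Zwara's Theorem~4 actually provides is a dichotomy for a minimal degeneration $M <_{\deg} N$: either such an exact sequence exists, or $M$ and $N$ have a common direct summand $Z$ with $M \simeq M' \oplus Z$, $N \simeq X \oplus Z$, $X$ indecomposable, and $M' <_{\deg} X$. Rigidity of $M$ alone does not exclude the second alternative; excluding it is the entire content of the lemma, and it is here that the shod-specific structure enters. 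The paper argues: if $M' <_{\deg} X$ with $X$ an indecomposable summand of $N$, then Riedtmann's inequalities give $\dim_k \Hom_\Lambda (M', X) \geq \dim_k \End_\Lambda (M') > 0$ and $\dim_k \Hom_\Lambda (X, M') \geq \dim_k \End_\Lambda (M') > 0$, so $X \in \add (J \oplus Q)$ by Proposition~\ref{proposition_shod}~\eqref{claim2}, whence $\End_\Lambda (X) \simeq k$ by part~\eqref{claim3}; since the degeneration $M' <_{\deg} X$ is proper this forces $\dim_k \End_\Lambda (M') < 1$, a contradiction. Your second, ``self-contained'' route gestures at the right ingredients (Riedtmann's inequalities, Proposition~\ref{proposition_shod}) but never assembles this contradiction; ``letting $V$ be the largest common quotient-type part'' is not an argument.

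Two smaller inaccuracies: Proposition~\ref{proposition_shod}~\eqref{claim3} gives $\End_\Lambda (X) \simeq k$ for an \emph{indecomposable} $X \in \add (J \oplus Q)$, but it does not make $\End_\Lambda (M)$ a product of copies of $k$ --- there are in general nonzero homomorphisms between distinct indecomposable summands of $M$, so the rigidity hypothesis you extract is weaker than you state. Also, the lemma does not require $U$ and $V$ to be nonzero, so the concluding worry about non-splitness is beside the point; what has to be proved is the existence of the sequence at all, which is precisely the step your proposal outsources to a citation that does not exist.
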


\begin{proof}
If there is not such a sequence, then the minimality of the
degeneration $M <_{\deg} N$ and~\cite{Zwara2000}*{Theorem~4} imply
that there exists an indecomposable direct summand $X$ of $N$ such
that $M' <_{\deg} X$ for a direct summand $M'$ of $M$. Then
\begin{align*}
\dim_k \Hom_\Lambda (M', X) \geq \dim_k \Hom_\Lambda (M', M') > 0
\\
\intertext{and} %
\dim_k \Hom_\Lambda (X, M') \geq \dim_k \Hom_\Lambda (M', M') > 0,
\end{align*}
hence $X \in \add (J \oplus Q)$ by
Proposition~\ref{proposition_shod}~\eqref{claim2}. Thus
$\End_\Lambda (X) \simeq k$ according
to~Proposition~\ref{proposition_shod}~\eqref{claim3}, and $\dim_k
\End_\Lambda (M') = 0$, a contradiction.
\end{proof}

\begin{proposition} \label{prop_min}
Let $U \in \add \calL$ and $V \in \add \calP$ be such that $U
\oplus V$ is a minimal degeneration of $M$.
\renewcommand{\labelenumi}{(\theenumi)}
\begin{enumerate}

\item \label{point1}
If $\bdim U \neq \bdim L$, then there exists an exact sequence $0
\to U \to M \to V \to 0$.

\item \label{point2}
If $\bdim U = \bdim L$, then either $U \simeq L$ or $V \simeq R$.

\end{enumerate}
\end{proposition}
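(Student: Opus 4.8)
The plan is to treat the two parts separately, each time combining the short exact sequence supplied by the previous lemma with the minimality of the degeneration $M <_{\deg} U \oplus V$.

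For~\eqref{point2}, note that $\bdim U = \bdim L$ forces $\bdim V = \bdim R$. Since $L, R \in \add (J \oplus Q)$, Proposition~\ref{proposition_shod}~\eqref{claim3} gives $\Ext_\Lambda^n (L, L) = 0 = \Ext_\Lambda^n (R, R)$ for all $n \geq 1$, so $\dim \calO (L) = a_\Lambda (\bdim L)$ and $\dim \calO (R) = a_\Lambda (\bdim R)$; as $\calL (\bdim L)$ and $\calP (\bdim R)$ are irreducible of dimensions $a_\Lambda (\bdim L)$ and $a_\Lambda (\bdim R)$ and contain $\calO (L)$, $\calO (R)$ respectively, those orbits are dense, whence $L \leq_{\deg} U$ and $R \leq_{\deg} V$ because $U \in \calL (\bdim L)$ and $V \in \calP (\bdim R)$. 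Thus
\[
M = L \oplus R \leq_{\deg} U \oplus R \leq_{\deg} U \oplus V ,
\]
and if $U \not\simeq L$ and $V \not\simeq R$ both degenerations are proper, contradicting minimality; hence $U \simeq L$ or $V \simeq R$.

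For~\eqref{point1}, apply the previous lemma to get a short exact sequence $0 \to U_1 \to M \to U_2 \to 0$ with $U_1 \oplus U_2 \simeq U \oplus V$, and decompose $U_i = A_i \oplus B_i$ with $A_i \in \add \calL$, $B_i \in \add \calP$; then $A_1 \oplus A_2 \simeq U$ and $B_1 \oplus B_2 \simeq V$ by the Krull--Schmidt theorem. Because $\Hom_\Lambda (\calP, \calL) = 0$ (Proposition~\ref{proposition_shod}~\eqref{claim1}), the monomorphism $B_1 \hookrightarrow M$ has image inside the summand $R$ of $M = L \oplus R$; identifying $B_1$ with that image, $M / B_1 \simeq L \oplus (R / B_1)$ with $R / B_1 \in \add \calP$, and the sequence descends to $0 \to A_1 \to M / B_1 \to U_2 \to 0$. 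Hence $M \leq_{\deg} B_1 \oplus (M / B_1) \leq_{\deg} U \oplus V$; an isomorphism $B_1 \oplus (M / B_1) \simeq U \oplus V$ would give $L \simeq U$ on $\add \calL$-parts, contradicting $\bdim U \neq \bdim L$, so minimality forces $M \simeq B_1 \oplus (M / B_1)$. Set $M' = M / B_1$. Using Proposition~\ref{proposition_shod}~\eqref{claim1} again, the composite $M' \to U_2 \to A_2$ vanishes on the $\add \calP$-part of $M'$, hence factors as $M' \twoheadrightarrow L \twoheadrightarrow A_2$; writing $K \in \add \calL$ for the kernel of $L \twoheadrightarrow A_2$ one obtains short exact sequences $0 \to K \oplus (R / B_1) \to M' \to A_2 \to 0$ and $0 \to A_1 \to K \oplus (R / B_1) \to B_2 \to 0$. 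These yield $M \leq_{\deg} B_1 \oplus \bigl( K \oplus (R / B_1) \bigr) \oplus A_2 \leq_{\deg} U \oplus V$; now an isomorphism of the middle module with $U \oplus V$ would give $K \simeq A_1$ on $\add \calL$-parts and hence $\bdim L = \bdim K + \bdim A_2 = \bdim U$, again impossible, so minimality gives $M \simeq B_1 \oplus \bigl( K \oplus (R / B_1) \bigr) \oplus A_2$. Finally, splicing the monomorphism $A_1 \hookrightarrow K \oplus (R / B_1)$ from the last sequence (with cokernel $B_2$) with the split inclusion of the summand $A_2$ produces the short exact sequence $0 \to A_1 \oplus A_2 \to M \to B_1 \oplus B_2 \to 0$, that is, $0 \to U \to M \to V \to 0$.

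The step I expect to be the main obstacle is the case analysis in~\eqref{point1}: each application of minimality requires checking that the alternative conclusion would force the forbidden equality $\bdim U = \bdim L$ (this is also what rules out, a priori, a sequence of the form $0 \to V \to M \to U \to 0$), and one must track the $\add \calL$- and $\add \calP$-parts of every module that appears. Apart from the previous lemma and the general facts on degenerations recalled above, the only inputs needed are $\Hom_\Lambda (\calP, \calL) = 0$, the closure of $\add \calL$ under submodules and of $\add \calP$ under quotients, and the self-orthogonality $\Ext_\Lambda^{\geq 1} (M, M) = 0$ from Proposition~\ref{proposition_shod}~\eqref{claim3}.
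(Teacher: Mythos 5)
Your proof is correct. Part~(2) is essentially the paper's own argument: both of you deduce $L \leq_{\deg} U$ and $R \leq_{\deg} V$ from the irreducibility of $\calL (\bdim L)$ and $\calP (\bdim R)$ (you via a dimension count showing the orbits are dense, the paper via $\ol{\calO (L)}$ being an irreducible component --- the same fact) and then contradict minimality. Part~(1) follows the same overall plan as the paper --- start from the lemma's sequence $0 \to U' \to M \to V' \to 0$ and repeatedly use minimality of $M <_{\deg} U \oplus V$ to force intermediate degenerations to be trivial, until the submodule term lies in $\add \calL$ and the quotient term in $\add \calP$ --- but the execution is genuinely different. The paper forms the push-out along the projection of $U'$ onto its $\add \calP$-part and the pull-back along the inclusion of the $\add \calL$-part of $V'$, splitting the latter with $\Ext_\Lambda^1 (\calL, \calP) = 0$ from Proposition~\ref{proposition_shod}~\eqref{claim1}; you instead use only the $\Hom$-vanishing $\Hom_\Lambda (\calP, \calL) = 0$ from the same statement, observing that $B_1$ embeds into $R$ and that the surjection onto $A_2$ factors through $L$, together with the closure of $\add \calL$ under submodules and of $\add \calP$ under quotients (which you correctly justify via Proposition~\ref{proposition_shod}~\eqref{claim5}, \eqref{claim6} and the cogeneration by $J$). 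Each of your two appeals to minimality is properly supported by a Krull--Schmidt comparison of $\add \calL$-parts showing the relevant intermediate module is not isomorphic to $U \oplus V$ precisely because $\bdim U \neq \bdim L$, and the final splicing does produce $0 \to U \to M \to V \to 0$. The trade-off is that your version avoids the push-out/pull-back formalism and any use of $\Ext^1$-vanishing between the two classes, at the cost of more explicit bookkeeping of the $\calL$- and $\calP$-parts of every module that appears.
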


\begin{proof}
\eqref{point1}~Assume that $\bdim U \neq \bdim L$. According to
the above lemma there exists an exact sequence $\sigma : 0 \to U'
\to M \to V' \to 0$ such that $U' \oplus V' \simeq U \oplus V$.
Write $U' = L_1 \oplus R_1$ and $V' = L_2 \oplus R_2$, where $U
\simeq L_1 \oplus L_2$ and $V \simeq R_1 \oplus R_2$. We first
show that we may assume that either $R_1 = 0$ or $L_2 = 0$.

Let $p : U' \to R_1$ be the canonical projection and $\sigma' : 0
\to R_1 \to M' \to V' \to 0$ be the push-out exact sequence $p
\circ \sigma$. If $\sigma'$ splits, then we easily get an exact
sequence $0 \to L_1 \to M \to V' \oplus R_1 \to 0$, thus the claim
follows in this case. On the other hand, if $\sigma'$ does not
split, then $M \simeq M' \oplus L_1$ by the minimality of the
degeneration $M <_{\deg} U \oplus V$. Moreover, the pull-back
sequence exact $\sigma' \circ i$, where $i : L_2 \to V'$ is the
canonical injection, splits by
Proposition~\ref{proposition_shod}~\eqref{claim1} and we get an
exact sequence $0 \to L_2 \oplus R_1 \to M' \to R_2 \to 0$, which
in turn gives rise to an exact sequence $0 \to U' \oplus L_2 \to M
\to R_2 \to 0$, as desired.

Without loss of generality we assume that $L_2 = 0$, i.e.\ $U
\simeq L_1$. If $\sigma'$ is as before, then $M' \in \add \calP$
(one may use for example
Proposition~\ref{proposition_shod}~\eqref{claim6}). Moreover, we
have a short exact sequence $0 \to U \to M \to M' \to 0$. This
sequence does not split, since $\bdim U \neq \bdim L$.
Consequently, $\sigma'$ splits, i.e.\ $M' \simeq V$, and we get an
exact sequence $0 \to U \to M \to V \to 0$, thus the claim
follows.

\eqref{point2}~Assume that $\bdim U = \bdim L$. Consequently,
$\bdim V = \bdim R$. Observe that both $U$ and $L$ are modules of
projective dimension at most $1$ and $\Ext_\Lambda^1 (L, L) = 0$.
Together with~\eqref{eq_inj} this implies that $\ol{\calO (L)}$ is
an irreducible component of $\mod_\Lambda (\bdim L)$, hence $L
\leq_{\deg} U$, since $\calL (\bdim L)$ is irreducible.
Analogously, $R \leq_{\deg} V$. If the both degenerations are
proper, then we have the following sequence of proper
degenerations
\[
M = L \oplus R <_{\deg} U \oplus R <_{\deg} U \oplus V
\]
which contradicts the minimality of the degeneration $M <_{\deg} U
\oplus V$.
\end{proof}

\begin{proof}[Proof of Theorem~\ref{theorem}]
Let $\calX$ be an irreducible component of $\ol{\calO (M)}
\setminus \calO (M)$. Our aim is to show that there exists an open
subset $\calU$ of $\calX$ such that all points of $\calU$ are
regular points of $\ol{\calO (M)}$. Observe that there exist
$\bd', \bd'' \in \bbN^{\Delta_0}$ such that $(\calL (\bd') \oplus
\calP (\bd'')) \cap \calX$ contains an open subset $\calU'$ of
$\calX$. Put
\begin{gather*}
d_0 = \min \{ \dim_k \End_\Lambda (N) \mid N \in \calX \}
\\
\intertext{and} %
d_1 = \min \{ \dim_k \Ext_\Lambda^1 (N, N) \mid N \in \calX \}.
\end{gather*}
Let $\calU$ be the set of all $N \in \calU'$ such that the
dimensions of $\End_\Lambda (N)$ and $\Ext_\Lambda^1 (N, N)$ equal
$d_0$ and $d_1$, respectively, and $N$ does not belong to an
irreducible component of $\ol{\calO (M)} \setminus \calO (M)$
different from $\calX$. Obviously, $\calU$ is a nonempty open
subset of $\calX$ consisting of minimal degenerations of $M$. Fix
$N \in \calU$, and $U \in \add \calL$ and $V \in \add \calR$ such
that $N \simeq U \oplus V$.

First assume that $\bd' = \bdim L$. According to
Proposition~\ref{prop_min}~\eqref{point2} we may assume without
loss of generality that $U \simeq L$. Moreover, since $\id_\Lambda
V \leq 1$, $V$ is a regular point of $\ol{\calO (R)}$ by
Corollary~\ref{corollary_regular}. Finally,
\begin{gather*}
\dim_k \Hom_\Lambda (L, R) = \langle \bd', \bd'' \rangle = \dim_k
\Hom_\Lambda (L, V)
\\
\intertext{and} %
\dim_k \Hom_\Lambda (R, L) = 0 = \dim_k \Hom_\Lambda (V, L)
\end{gather*}
hence the claim follows from~\cite{Bongartz1994}*{Theorem~2}.

Now assume that $\bd' \neq \bdim L$. According to
Proposition~\ref{prop_min}~\eqref{point1} there exists an exact
sequence $\xi : 0 \to U \to M \to V \to 0$, which in particular
implies that $\pd_\Lambda V \leq 2$. Put $\calV = \mod_\Lambda
(\bd') \times \mod_\Lambda (\bd'')$. We identify $\calV$ with a
subset of $\mod_\Lambda (\bd)$ in an obvious way. Without loss of
generality we may assume that $N \in \calV$, i.e.\ $N = U \oplus
V$. Since $\dim_k T_N \calV = a_\Lambda (\bd') + a_\Lambda
(\bd'')$, it is sufficient to show that
\renewcommand{\theequation}{$*$}
\begin{equation} \label{eq}
\dim_k T_N (\calV \cap \ol{\calO (M)}) \leq a_\Lambda (\bd') +
a_\Lambda (\bd'') - \dim_k \Ext_\Lambda^2 (V, U).
\end{equation}
Indeed, this will imply that
\[
\dim_k T_N \ol{\calO (M)} \leq \dim_k T_N \mod_\Lambda (\bd) -
\dim_k \Ext^2 (V, U),
\]
hence we may apply~\eqref{eq_dim} and use that $\Ext_\Lambda^2 (V,
U) = \Ext_\Lambda^2 (N, N)$ and $\pd_\Lambda N \leq 2$.

We prove~\eqref{eq}. Let $\calU_0$ be the intersection of $\calU$
with $\calV$. Observe that $\calU_0$ is an open subset of $\calV
\cap \ol{\calO (M)}$. Moreover, $\calU_0 \subset \calE_d^{\bd',
\bd''}$, where
\[
d = d_1 - d_0 + \chi_\Lambda (\bd') + \chi_\Lambda (\bd'') +
\langle \bd', \bd'' \rangle_\Lambda.
\]
Consequently, the claim follows from~\eqref{eq_calE}.
\end{proof}

\bibsection

\begin{biblist}

\bib{AssemCoelhoTrepode2004}{article}{
   author={Assem, I.},
   author={Coelho, F. U.},
   author={Trepode, S.},
   title={The left and the right parts of a module category},
   journal={J. Algebra},
   volume={281},
   date={2004},
   number={2},
   pages={518--534},
}

\bib{AssemSimsonSkowronski2006}{book}{
   author={Assem, I.},
   author={Simson, D.},
   author={Skowro{\'n}ski, A.},
   title={Elements of the Representation Theory of Associative Algebras. Vol. 1},
   series={London Math. Soc. Stud. Texts},
   volume={65},
   publisher={Cambridge Univ. Press},
   place={Cambridge},
   date={2006},
   pages={x+458},
}

\bib{BarotSchroer2001}{article}{
   author={Barot, M.},
   author={Schr{\"o}er, J.},
   title={Module varieties over canonical algebras},
   journal={J. Algebra},
   volume={246},
   date={2001},
   number={1},
   pages={175--192},
}

\bib{Bobinski2002}{article}{
   author={Bobi{\'n}ski, G.},
   title={Geometry of decomposable directing modules over tame algebras},
   journal={J. Math. Soc. Japan},
   volume={54},
   date={2002},
   number={3},
   pages={609--620},
}

\bib{Bobinski2008a}{article}{
   author={Bobi{\'n}ski, G.},
   title={Geometry of regular modules over canonical algebras},
   journal={Trans. Amer. Math. Soc.},
   volume={360},
   date={2008},
   number={2},
   pages={717--742},
}

\bib{Bobinski2007preprint}{article}{
   author={Bobi{\'n}ski, G.},
   title={Orbit closures of directing modules are regular in codimension one},
   eprint={arXiv:0712.1246},
}

\bib{Bobinski2008preprint}{article}{
   author={Bobi{\'n}ski, G.},
   title={On regularity in codimension one of irreducible components of module varieties},
   eprint={arXiv:0804.2085},
}

\bib{BobinskiSkowronski1999a}{article}{
   author={Bobi{\'n}ski, G.},
   author={Skowro{\'n}ski, A.},
   title={Geometry of modules over tame quasi-tilted algebras},
   journal={Colloq. Math.},
   volume={79},
   date={1999},
   number={1},
   pages={85--118},
}

\bib{BobinskiSkowronski1999b}{article}{
   author={Bobi{\'n}ski, G.},
   author={Skowro{\'n}ski, A.},
   title={Geometry of directing modules over tame algebras},
   journal={J. Algebra},
   volume={215},
   date={1999},
   number={2},
   pages={603--643},
}

\bib{BobinskiSkowronski2002}{article}{
   author={Bobi{\'n}ski, G.},
   author={Skowro{\'n}ski, A.},
   title={Geometry of periodic modules over tame concealed and tubular algebras},
   journal={Algebr. Represent. Theory},
   volume={5},
   date={2002},
   number={2},
   pages={187--200},
}

\bib{Bongartz1994}{article}{
   author={Bongartz, K.},
   title={Minimal singularities for representations of Dynkin quivers},
   journal={Comment. Math. Helv.},
   volume={69},
   date={1994},
   number={4},
   pages={575--611},
}

\bib{Bongartz1996}{article}{
   author={Bongartz, K.},
   title={On degenerations and extensions of finite-dimensional modules},
   journal={Adv. Math.},
   volume={121},
   date={1996},
   number={2},
   pages={245--287},
   issn={0001-8708},
}

\bib{CoelhoHappelUnger2002}{article}{
   author={Coelho, F. U.},
   author={Happel, D.},
   author={Unger, L.},
   title={Tilting up algebras of small homological dimensions},
   journal={J. Pure Appl. Algebra},
   volume={174},
   date={2002},
   number={3},
   pages={219--241},
}

\bib{CoelhoLanzilotta1999}{article}{
   author={Coelho, F. U.},
   author={Lanzilotta, M. A.},
   title={Algebras with small homological dimensions},
   journal={Manuscripta Math.},
   volume={100},
   date={1999},
   number={1},
   pages={1--11},
}

\bib{Crawley-BoeveySchroer2002}{article}{
   author={Crawley-Boevey, W. W.},
   author={Schr{\"o}er, J.},
   title={Irreducible components of varieties of modules},
   journal={J. Reine Angew. Math.},
   volume={553},
   date={2002},
   pages={201--220},
   issn={0075-4102},
}

\bib{Geiss1996a}{collection.article}{
   author={Gei{\ss}, Ch.},
   title={Geometric methods in representation theory of finite-dimensional algebras},
   book={
      title={Representation Theory of Algebras and Related Topics},
      series={CMS Conf. Proc.},
      volume={19},
      editor={Bautista, R.},
      editor={Mart{\'{\i}}nez-Villa, R.},
      editor={de la Pe{\~n}a, J. A.},
      publisher={Amer. Math. Soc.},
      place={Providence},
   },
   date={1996},
   pages={53--63},
}

\bib{GeissSchroer2003}{article}{
   author={Geiss, Ch.},
   author={Schr{\"o}er, J.},
   title={Varieties of modules over tubular algebras},
   journal={Colloq. Math.},
   volume={95},
   date={2003},
   number={2},
   pages={163--183},
   issn={0010-1354},
}

\bib{HappelReitenSmalo1996a}{article}{
   author={Happel, D.},
   author={Reiten, I.},
   author={Smal{\o}, S. O.},
   title={Tilting in abelian categories and quasitilted algebras},
   journal={Mem. Amer. Math. Soc.},
   volume={120},
   date={1996},
   number={575},
   pages={viii+ 88},
}

\bib{HappelRingel1993}{article}{
   author={Happel, D.},
   author={Ringel, C. M.},
   title={Directing projective modules},
   journal={Arch. Math. (Basel)},
   volume={60},
   date={1993},
   number={3},
   pages={237--246},
}

\bib{Kraft1984}{book}{
   author={Kraft, H.},
   title={Geometrische Methoden in der Invariantentheorie},
   series={Aspects Math.},
   volume={D1},
   publisher={Vieweg},
   place={Braunschweig},
   date={1984},
   pages={x+308},
}

\bib{KraftRiedtmann1986}{collection.article}{
   author={Kraft, H.},
   author={Riedtmann, Ch.},
   title={Geometry of representations of quivers},
   book={
      title={Representations of Algebras},
      series={London Math. Soc. Lecture Note Ser.},
      volume={116},
      editor={Webb, P.},
      publisher={Cambridge Univ. Press},
      place={Cambridge},
   },
   date={1986},
   pages={109--145},
}

\bib{Kunz1985}{book}{
   author={Kunz, E.},
   title={Introduction to Commutative Algebra and Algebraic Geometry},
   publisher={Birkh\"auser Boston},
   place={Boston},
   date={1985},
   pages={xi+238},
   isbn={3-7643-3065-1},
}

\bib{Richmond2001}{article}{
   author={Richmond, N. J.},
   title={A stratification for varieties of modules},
   journal={Bull. London Math. Soc.},
   volume={33},
   date={2001},
   number={5},
   pages={565--577},
}

\bib{Riedtmann1986b}{article}{
   author={Riedtmann, Ch.},
   title={Degenerations for representations of quivers with relations},
   journal={Ann. Sci. \'Ecole Norm. Sup. (4)},
   volume={19},
   date={1986},
   number={2},
   pages={275--301},
}

\bib{Schroer2004}{article}{
   author={Schr{\"o}er, J.},
   title={Varieties of pairs of nilpotent matrices annihilating each other},
   journal={Comment. Math. Helv.},
   volume={79},
   date={2004},
   number={2},
   pages={396--426},
}

\bib{Skowronski2003}{article}{
   author={Skowro{\'n}ski, A.},
   title={On Artin algebras with almost all indecomposable modules of projective or injective dimension at most one},
   journal={Cent. Eur. J. Math.},
   volume={1},
   date={2003},
   number={1},
   pages={108--122},
}

\bib{Zwara2000}{article}{
   author={Zwara, G.},
   title={Degenerations of finite-dimensional modules are given by extensions},
   journal={Compositio Math.},
   volume={121},
   date={2000},
   number={2},
   pages={205--218},
}

\end{biblist}

\end{document}